\documentclass[a4paper, 12pt]{article}

\usepackage[T2A]{fontenc}
\usepackage[utf8]{inputenc}
\usepackage[english,russian]{babel}
\usepackage[tbtags]{amsmath}
\usepackage{amsfonts,amssymb}
\usepackage{cite, color}

\usepackage{graphicx}

\newtheorem{theorem}{Теорема}

\newtheorem{corollary}{Следствие}

\def\R{{\mathbb R}}

\newcommand{\be}[1]{\begin{equation}\label{#1}}
\newcommand{\ee}{\end{equation}}
\newcommand{\pder}[2]{\frac{\partial \, #1}{\partial \, #2} }

\title{Лоренцева задача на группе Гейзенберга}
\author{И.А. Галяев, Ю.Л. Сачков}

\begin{document}

\maketitle


\section{Введение}
С точки зрения глобальной дифференциальной геометрии общая теория относительности описывается лоренцевой геометрией \cite{Muller, beem}.
Важной исследовательской задачей является сопоставление методов и результатов лоренцевой геометрии с методами и результатами римановой геометрии. Например, в лоренцевой геометрии информация может распространяться вдоль
кривых с векторами скорости из некоторого острого конуса. Естественной является
задача отыскания лоренцевых длиннейших, максимизирующих функционал типа длины
вдоль допустимых кривых. Поэтому важной задачей является описание лоренцевых длиннейших для всех пар точек, где вторая достижима из первой вдоль допустимой кривой.
Эта задача полностью исследована лишь в простейшем случае
левоинвариантной лоренцевой структуры в $\mathbb{R}^{n+1}$, для пространства Минковского $\mathbb{R}_1^{n+1}$ \cite{Ivanov}.\\

Левоинвариантная лоренцева структура на группе Ли - это невырожденная квадратичная форма $g$ индекса 1 на алгебре Ли $\mathfrak{g}$. Напомним некоторые основные определения лоренцевой геометрии \cite{intro}. Элемент $X\in \mathfrak{g}$ называется времениподобным, если $g(X) < 0$, пространственноподобным, если $g(X) > 0$, светоподобным (или нулевым), если $g(X)=0$. Липшицева кривая в $M$ называется времениподобной, если она имеет времениподобный вектор скорости почти везде; пространственноподобные, светоподобные и непространственноподобные кривые определяются аналогично. Ориентация времени $X_0$ - это произвольный времениподобный элемент $X_0\in \mathfrak{g}$. Направленная в будущее времениподобная кривая $q(t), t \in [0, t_1]$, называется параметризованной длиной дуги, если $g(\dot{q(t)},\dot{q(t)}) \equiv -1$. Любая направленная в будущее времениподобная кривая может быть параметризована длиной дуги, аналогично римановой геометрии.

Лоренцева длина непространственноподобной кривой $\gamma \in \mathrm{Lip}([0,t_1],G)$ это:
$$l(\gamma)=\int_0^{t_1}\sqrt{|g(\dot{\gamma},\dot{\gamma})|}dt.$$
Для точек $q_0, q_1 \in G$ обозначим через $\Omega_{q_0q_1}$ множество всех будущих направленных непространственноподобных кривых в $G$, которые соединяют $q_0$ с $q_1$. В случае $\Omega_{q_0q_1}=\emptyset$ определим лоренцево расстояние (функцию разделения времени) от точки $q_0$ до точки $q_1$ как
\begin{equation}\label{eq:dlinna}
    d(q_0,q_1)=\mathrm{sup}\{l(\gamma)|\gamma\in\Omega_{q_0q_1}\}.
\end{equation}
Если $\Omega_{q_0q_1}=\emptyset$, по определению $d(q_0,q_1)=0$.

Направленная в будущее непространственноподобная кривая $\gamma$ называется максимизатором длины Лоренца, если она максимальна в \eqref{eq:dlinna} между своими конечными точками $\gamma(0) = q_0, \gamma(t_1) = q_1$. Причинное будущее точки $q_0 \in G$ - это множество $J^+_{q_0}$ точек $q_1 \in G$, для которых существует направленная в будущее непространственноподобная кривая $\gamma$, которая соединяет $q_0$ и $q_1$. Причинное прошлое $J^-_{q_0}$ определяется аналогично в терминах направленных в прошлое непространственноподобных кривых.

Пусть $q_0 \in G, q_1 \in J^+_{q_0}$. Поиск максимизаторов длины Лоренца, которые соединяют $q_0$ с $q_1$, сводится к поиску будущих направленных непространственноподобных кривых $\gamma$, которые решают задачу
$$l(\gamma)\rightarrow \max, \ \ \ \gamma(0)=q_0, \ \ \ \gamma(t_1)=q_1.$$

В этой статье рассмотрено три лоренцевы задачи на группе Гейзенберга. К задачам применен принцип максимума Понтрягина, получена параметризация анормальных и нормальных экстремальных траекторий. Исследованы множества достижимости и существование оптимальных траекторий.

Группа Гейзенберга есть пространство $G \simeq\R^3 = \{(x,y, z) \mid x, y, z \in \R\}$ с базисом левоинвариантных векторных полей
$$
X_1 = \pder{}{x} - \frac y2 \pder{}{z}, \qquad
X_2 = \pder{}{y} + \frac x2 \pder{}{z}, \qquad
X_3 =   \pder{}{z}.
$$

\section{Первая задача}
\subsection{Постановка задачи}
Сформулируем первую задачу Лоренца на группе Гейзенберга следующим образом \cite{beem}:
\begin{align}
&\dot q = \sum_{i=1}^3 u_i X_i, \qquad q = (x, y, z) \in G, \label{pr1}\\
&u \in U = \{ (u_1, u_2, u_3) \in \R^3 \mid u_1^2 + u_2^2- u_3^2 \leq 0, \ u_3 \geq 0\},\label{pr2}\\
&q(0) = q_0 = (0, 0, 0), \qquad q(t_1) = q_1,  \label{pr3}\\
&J(\gamma) = \int_0^{t_1} \sqrt{u_3^2 - u_1^2-u_2^2} \, dt  \to \max. \label{pr4}
\end{align}


\noindent Система управления \eqref{pr1} записывается в координатах следующим образом:
\begin{equation}\label{eq:sys}
\left\{\begin{array}{l}
\displaystyle \dot{x} = u_1, \\
\displaystyle \dot{y} = u_2, \\
\displaystyle \dot{z} = -\frac{y}{2}u_1+\frac{x}{2}u_2+u_3.
\end{array}\right.
\end{equation}

Запишем общий вид функции Понтрягина
\begin{equation*}\label{eq:pontHam}
h_{u}^\nu =\langle p, \sum\limits_{i=1}^3 u_i X_i\rangle + \nu  \sqrt{u_3^2 - u_1^2-u_2^2}, \quad p \in T^*M, \,\, \nu \leq 0.
\end{equation*}

Пусть процесс $(u(t), q(t))$, $t \in[0,T]$, будет оптимальным, то выполняются следующие условия:
\begin{enumerate}
\item Гамильтонова система $\displaystyle \dot{p}=-\frac{\partial h_u^\nu}{\partial q},\,\,  \dot{q}=\frac{\partial h_u^\nu}{\partial p}$;
\item Условие максимума $h_{u(t)}^\nu (p(t),q(t)) ={\underset{u \in \mathbb{R}^3}{\max}}\, h_{u}^\nu (p(t),q(t)) $;
\item Условие нетривиальности $(p(t),\nu)\neq(0,0)$ $\forall t \in [0,T]$.
\end{enumerate}

Обозначим $h_i=\langle p,X_i\rangle.$ Тогда функция Понтрягина выражается следующим образом:
\begin{equation*}
h_{u}^\nu =u_1 h_1+u_2 h_2 +u_3 h_3-\nu  \sqrt{u_3^2 - u_1^2-u_2^2}.
\end{equation*}

В формулировке ПМП, не ограничивая общности, достаточно рассмотреть два случая: $\nu= 0$ — аномальный случай и $\nu =-1$ --- нормальный случай. Рассмотрим их подробно.

\subsection{Аномальный случай принципа максимума Понтрягина}
Пусть $\nu=0$. Определим $(a,b,c)$ как компоненты ковектора $p$ в канонических координатах. Они изменяются по закону $\displaystyle \dot{p_i}=-\frac{\partial h_{u}^0}{\partial q_i}$:

Систему управления можно записать в виде: \\
\begin{equation*}
\left\{\begin{array}{l}
\displaystyle \dot{x} = u_1, \\
\displaystyle \dot{y} = u_2, \\
\displaystyle \dot{z} = -\frac{y}{2}u_1+\frac{x}{2}u_2+u_3,
\end{array}\right.
\quad
\left\{\begin{array}{l}
\displaystyle \dot{a} = -c\frac{u_2}{2}, \\
\displaystyle \dot{b} = c\frac{u_1}{2}, \\
\displaystyle \dot{c} = 0.
\end{array}\right.
\end{equation*}\\ 

Связь между $h_i$ и $p_i$ явно записывается как: \\
\begin{equation*}
\left\{\begin{array}{l}
\displaystyle h_1 = a-c\frac{y}{2}, \\
\displaystyle h_2 = b+c\frac{x}{2}, \\
\displaystyle h_3 = c. \\
\end{array}\right.
\end{equation*}\\ 

Функция Понтрягина $h_{u}^0$ принимает вид $h_{u}^0 =u_1 h_1+u_2 h_2+u_3 h_3.$ 
Выясним, при каких условиях функция $h_{u}^0$  достигает максимума, и на каких управлениях  $u \in U$.\\

$h_i$ суть наперед заданные константы. Опишем пространство $h_i$, разделив его на 4 подпространства.\\
1) $h_1^2+h_2^2-h_3^2 \leq 0$, $h_3 \geq 0$.\\
Запишем ограничения для случая 1
\begin{equation*}
\left\{\begin{array}{l}
\displaystyle u_3^2 \geq u_1^2+u_2^2, \\
\displaystyle h_3^2 \geq h_1^2+h_2^2, \\
\end{array}\right.
\end{equation*}\\  
Тогда справедливы следующие неравенства\\
$u_3^2h_3^2\geq u_1^2h_1^2+h_2^2u_2^2+u_1^2h_2^2+u_2^2h_1^2, $\\
$u_1^2h_1^2+h_2^2u_2^2+u_1^2h_2^2+u_2^2h_1^2 - (u_1h_1+u_2h_2)^2=(u_1h_2-u_2h_1)^2\geq 0.$\\
Откуда следует, что \\
$|u_3h_3|\geq |u_1h_1|+|u_2h_2|.$\\
Исследуем $u_3h_3$. Если $h_3>0$, в совокупности с неограниченностью $u_3$, то и $u_3h_3$ неограниченна.\\
$u_3=0$ означает и $u_1=u_2=0$.\\
Максимума функции $h_u^0$ либо не существует, либо решение тривиально.\\

2) $h_1^2+h_2^2-h_3^2 = 0$, $h_3 < 0$.\\
Согласно предыдущему пункту, $|u_3h_3|\geq |u_1h_1|+|u_2h_2|.$ Тогда:\\
$h_u^0\leq 0.$\\
$h_u^0$ достигает максимума, когда $u_3^2=u_1^2+u_2^2.$ Тривиальный случай дает максимум $h_u^0= 0.$ Рассмотрим и другие случаи достижения максимума. 

3) $h_1^2+h_2^2-h_3^2 < 0$, $h_3 < 0.$\\
Почти аналогично 1 случаю. 
\begin{equation*}
\left\{\begin{array}{l}
\displaystyle u_3^2 \geq u_1^2+u_2^2, \\
\displaystyle h_3^2 \geq h_1^2+h_2^2, \\
\end{array}\right.
\end{equation*}\\ 
Только из-за строгого равенства $h_u^0 \neq 0.$ То есть в любом случае функция отрицательна. Максимума нет.

4) $h_1^2+h_2^2-h_3^2 > 0$.\\
При $u = k(h_1, h_2, \sqrt{h_1^2+h_2^2})$ получаем $h_u = k \sqrt{h_1^2+h_2^2}(\sqrt{h_1^2+h_2^2} + h_3) \to + \infty$  при $k \to + \infty$.\\
Поэтому в случае 4) максимума не существует.

Во 2 случае $u_3^2=u_1^2+u_2^2.$ Выбираем натуральную параметризацию $u_1^2+u_2^2=1$.
Максимум функции $h_u^0=h_1u_1+ h_2u_2 + h_3u_3=0$, когда она равна 0. \\
$h_1u_1+ h_2u_2 - \sqrt{h_1^2+h_2^2}\sqrt{u_1^2+u_2^2}=0.$\\
$\displaystyle u_1=\frac{h_1}{\sqrt{h_1^2+h_2^2}}$,$\displaystyle u_2=\frac{h_2}{\sqrt{h_1^2+h_2^2}}$.
\begin{equation*}
\left\{\begin{array}{l}
\displaystyle \dot{x} = \frac{h_1}{\sqrt{h_1^2+h_2^2}}, \\
\displaystyle \dot{y} = \frac{h_2}{\sqrt{h_1^2+h_2^2}}, \\
\displaystyle \dot{z} = -\frac{yh_1}{\sqrt{h_1^2+h_2^2}}+\frac{xh_2}{\sqrt{h_1^2+h_2^2}}+1,
\end{array}\right.
\quad
\left\{\begin{array}{l}
\displaystyle \dot{h_1} = -h_3h_2, \\
\displaystyle \dot{h_2} = h_3h_1, \\
\displaystyle \dot{h_3} = 0.
\end{array}\right.
\end{equation*}\\ 

Положим $h_1^2+h_2^2=1, h_3=1.$ Перейдем к полярной координате $h_1 = \cos \theta$, $h_2 = \sin \theta,$
$\theta = h_3t+\theta_0.$ Тогда решение системы:

\begin{equation*}
\left\{\begin{array}{l}
\displaystyle x = \sin (t-\theta_0)+\sin (\theta_0),\\
\displaystyle y = \cos (t-\theta_0)-\cos (\theta_0),\\
\displaystyle z= \frac{t+\sin t}{2}.\\
\end{array}\right.
\end{equation*}\\ 
\begin{figure}[h]
\includegraphics[width=0.5\linewidth, height=6cm]{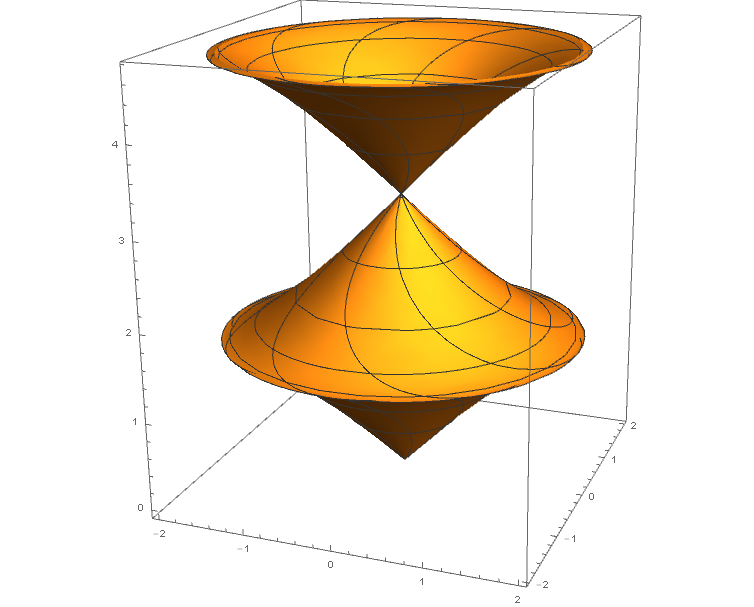} 
\includegraphics[width=0.5\linewidth, height=6cm]{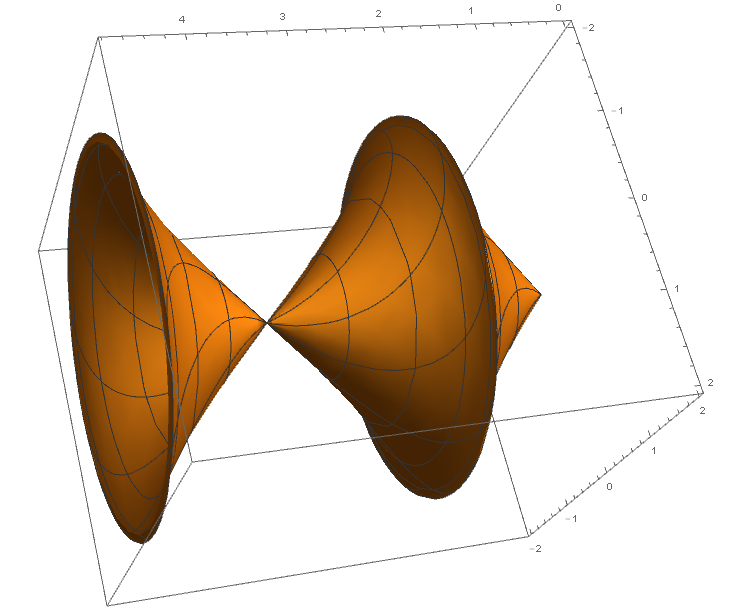}
\caption{График динамики системы при $\theta_0\in [-2\pi, 2\pi], t \in [0,10]$.}
\label{fig:image1}
\end{figure}

\subsection{Нормальный случай принципа максимума Понтрягина}
Пусть $\nu=-1$. Выпишем функцию Понтрягина для нормального случая:
\begin{equation*}
\displaystyle h_u = h_{u}^{-1} =u_1 h_1+u_2 h_2+u_3 h_3 + \sqrt{u_3^2 - u_1^2-u_2^2}.
\end{equation*}

Рассмотрим несколько вариантов\\
1) $h_1^2+h_2^2-h_3^2 \leq 0$, $h_3 \geq 0$.\\
Если выбрать $(u_1, u_2, u_3) = k(h_1, h_2, h_3)$, то
$$
h_u^{-1} \to + \infty, \text{ при } k \to \infty. 
$$
Поэтому в случае 1) максимума не существует.

2) $h_1^2+h_2^2-h_3^2 > 0$.\\
При $u = k(h_1, h_2, \sqrt{h_1^2+h_2^2})$ получаем \\
$h_u = k \sqrt{h_1^2+h_2^2}(\sqrt{h_1^2+h_2^2} + h_3) \to + \infty$  при $k \to + \infty$.\\
Поэтому в случае 2) максимума не существует.

2) $h_1^2+h_2^2-h_3^2 > 0$.\\
При $u = k(h_1, h_2, h_3)$ получаем $h_u = k \sqrt{h_1^2+h_2^2}(\sqrt{h_1^2+h_2^2} + h_3) \to + \infty$  при $k \to + \infty$.\\
Поэтому в случае 2) максимума не существует.

3) $h_1^2+h_2^2-h_3^2 = 0$, $h_3 < 0.$\\
Положим
\begin{align*}
&u_3 = \rho \cosh \alpha, \qquad u_1 = \rho \sinh \alpha \cos \beta, \qquad u_2 = \rho \sinh \alpha \sin \beta,\\
&h_3 = -R, \qquad h_1 = R  \cos b, \qquad h_2 = R   \sin b.
\end{align*}
Тогда $h_u = \rho(R(\sinh \alpha \cos(\beta-b)-\cosh \alpha)+1) \to + \infty$ при $\beta = b$, $\alpha\to +\infty$, $\rho \to + \infty$.\\
Поэтому в случае 3) максимума не существует.

4) $h_1^2+h_2^2-h_3^2 < 0$, $h_3 < 0.$\\
Положим
\begin{align*}
&u_3 = \rho \cosh \alpha, \qquad u_1 = \rho \sinh \alpha \cos \beta, \qquad u_2 = \rho \sinh \alpha \sin \beta,\\
&h_3 = -R \cosh a, \qquad h_1 = R  \sinh a \cos b, \qquad h_2 = R \sinh a  \sin b.
\end{align*}
Тогда $h_u = \rho(R(\sinh \alpha \sinh a \cos(\beta-b)-\cosh \alpha \cosh a)+1)$.

Если $R < 1$, то $h_u \to + \infty$  при $\beta = b$, $\alpha = a$, $h_u=\rho(1-R),\rho \to +\infty$.

Если $R > 1$, то $\max h_u =0$  при $\rho = 0$. 

Если $R = 1$, то $\max h_u = \sqrt{h_1^2+h_2^2-h_3^2} + 1$  при 
$$(u_1, u_2, u_3) = (h_1, h_2, -h_3)/ \sqrt{h_1^2+h_2^2-h_3^2}.$$

Поэтому в нормальном случае экстремали суть траектории гамильтоновой системы с гамильтонианом $H = (h_1^2+h_2^2-h_3^2)/2$:

\begin{align}
&\dot h_1 = - h_2 h_3, \label{dh1}\\
&\dot h_2 =  h_1 h_3, \label{dh2}\\
&\dot   h_3 = 0, \label{dh3}\\
&\dot q = h_1 X_1 + h_2 X_2 - h_3 X_3 \label{dq}.
\end{align}

Без ограничений общности задаем, что $h_1^2+h_2^2-h_3^2=1$. Тогда $H=1/2.$

\begin{equation*}
\left\{\begin{array}{l}
\displaystyle \dot{x} = h_1, \\
\displaystyle \dot{y} = h_2, \\
\displaystyle \dot{z} = -yh_1/2+xh_2/2-h_3,
\end{array}\right.
\quad
\left\{\begin{array}{l}
\displaystyle \dot{h_1} = - h_2 h_3, \\
\displaystyle \dot{h_2} = h_1 h_3, \\
\displaystyle \dot{h_3} = 0.
\end{array}\right.
\end{equation*}

Положим:
\begin{align*}
&h_3 = -\cosh a, \qquad h_1 =   \sinh a \cos \theta, \qquad h_2 =  \sinh a  \sin \theta.
\end{align*}

Тогда подсистема для $h_i$ примет вид:
\begin{equation*}
\left\{\begin{array}{l}
\displaystyle \dot{\theta} = -\cosh a, \\
\displaystyle \dot{h_3} = 0.
\end{array}\right.
\end{equation*}\\ 

Отсюда:
\begin{equation*}
\left\{\begin{array}{l}
\displaystyle \theta = \theta_0 -t \cosh a, \\
\displaystyle h_1 =  \sinh a \cos (\theta_0 -t \cosh a), \\
\displaystyle h_2 =  \sinh a \sin (\theta_0 -t \cosh a), \\
\displaystyle h_3 = - \cosh a.
\end{array}\right.
\end{equation*}\\ 
\begin{equation*}
\left\{\begin{array}{l}
\displaystyle x = \th a (\sin \theta_0-\sin (\theta_0 -t \cosh a)), \\
\displaystyle y = \th a (\cos (\theta_0 -t \cosh a)-\cos \theta_0), \\
\displaystyle z = t\frac{1+ \cosh^2 a}{2 \cosh a}+\frac{\th^2 a \sin (t \cosh a)}{2}.
\end{array}\right.
\end{equation*}\\ 
\begin{figure}[h]
\includegraphics[width=0.4\linewidth, height=6cm]{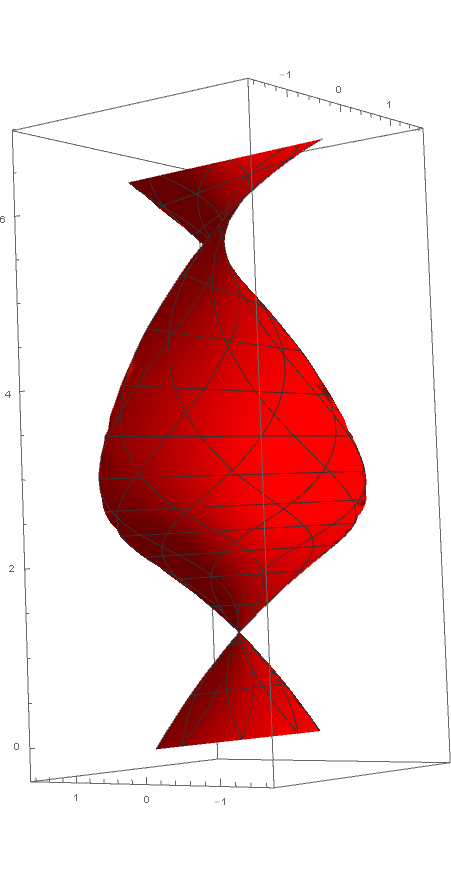} 
\includegraphics[width=0.55\linewidth, height=6cm]{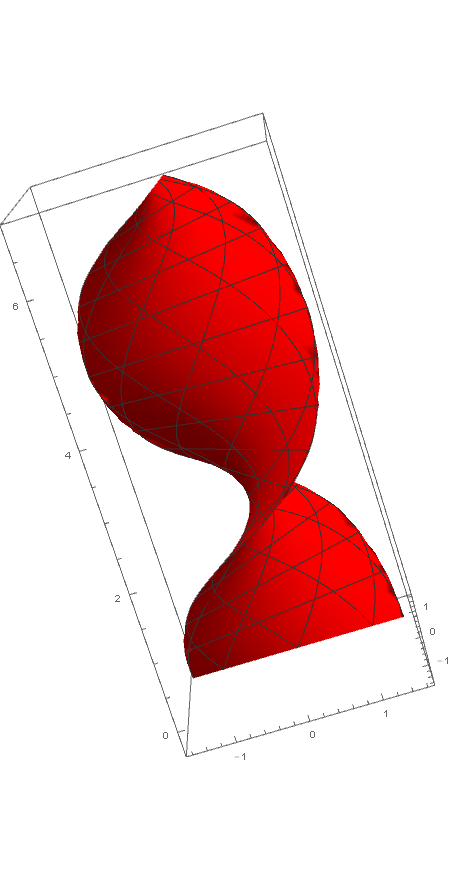}
\caption{График динамики системы при $a=1, \theta_0\in [-2\pi, 2\pi], t \in [0,6]$.}
\label{fig:image2}
\end{figure}

\subsection{Множество достижимости и существование оптимальных траекторий}
\begin{theorem}
Система \eqref{pr1}, \eqref{pr2} глобально управляема.
\end{theorem}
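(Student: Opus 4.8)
The plan is to prove controllability by showing that the attainable set from the origin equals all of $G$. Since the fields $X_1,X_2,X_3$ are left-invariant, the attainable set from an arbitrary point $q_0$ is $q_0\cdot\mathcal{A}_{(0,0,0)}$, where $\mathcal{A}_{(0,0,0)}$ is the attainable set from $(0,0,0)$; hence it suffices to steer the origin to an arbitrary point $q_1=(x_1,y_1,z_1)$. The trajectory will be built by concatenating two pieces: first a motion that fixes the right $(x,y)$-position, then a motion that corrects the $z$-coordinate without moving $(x,y)$.

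For the first piece I would use that the $(x,y)$-subsystem is itself completely controllable: from $\dot x=u_1$, $\dot y=u_2$ with $(u_1,u_2)$ ranging over the full disk $u_1^2+u_2^2\le u_3^2$ and $u_3\ge0$ unbounded, the velocity $(\dot x,\dot y)$ may be prescribed arbitrarily in $\R^2$ (take $u_3=\sqrt{u_1^2+u_2^2}$), so any absolutely continuous planar curve is the $(x,y)$-projection of an admissible trajectory. Steering $(x,y)$ from $(0,0)$ to $(x_1,y_1)$ along the straight segment with unit speed and $u_3\equiv1$ (admissible, since then $u_1^2+u_2^2-u_3^2=0$ and $u_3\ge0$) brings the origin to $(x_1,y_1,z')$ where, because $\tfrac12(x\dot y-y\dot x)$ vanishes along a ray through the origin, $z'=\int_0^{t_1}u_3\,dt=\sqrt{x_1^2+y_1^2}$.

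For the second piece I would analyze the increment of $z$ along a closed planar loop based at $(x_1,y_1)$. From $\dot z=\tfrac12(x\dot y-y\dot x)+u_3$, this increment is the signed area enclosed by the loop plus $\int u_3\,dt\ge0$. Taking a clockwise circle of radius $r$ traversed with unit speed and $u_3\equiv1$ gives increment $f(r)=-\pi r^2+2\pi r=\pi r(2-r)$, which is continuous, vanishes at $r=0$, and tends to $-\infty$ as $r\to+\infty$, hence assumes every value in $(-\infty,0]$. Complementarily, the control $u=(0,0,c)$ with $c>0$ keeps $(x,y)$ fixed and raises $z$ at rate $c$, so $z$ can be increased by any positive amount. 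Thus from $(x_1,y_1,z')$ one reaches $(x_1,y_1,z_1)$ for every $z_1$: if $z_1\ge z'$ by waiting, and if $z_1<z'$ by a single clockwise circle of the radius $r=1+\sqrt{1-(z_1-z')/\pi}>2$ solving $f(r)=z_1-z'$.

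Concatenating the straight segment with this loop-or-wait yields an admissible trajectory from $(0,0,0)$ to the arbitrary point $(x_1,y_1,z_1)$, so $\mathcal{A}_{(0,0,0)}=G$ and the system is globally controllable. The one genuinely non-obvious point — and the step I would flag as the crux — is the possibility of \emph{decreasing} $z$ despite $u_3\ge0$ forcing $\int u_3\,dt\ge0$: this succeeds because along a large circular loop the enclosed signed area grows quadratically in the radius while the elapsed time (hence $\int u_3\,dt$) grows only linearly, so the area term dominates. Everything else is a routine verification that the indicated controls lie in $U$ and that concatenations of admissible trajectories are admissible.
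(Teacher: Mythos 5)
Your proof is correct, but it takes a genuinely different route from the paper. The paper proves this theorem by a one-line appeal to the geometric formulation of the Pontryagin maximum principle \cite{notes}: a point on the boundary of an attainable set must be reached by a geometric extremal, and the analysis of $\max_{u\in U}h_u^0$ over the cone $U$ (essentially the case analysis done in the anormal section) shows that such boundary behaviour cannot occur, so the attainable set is all of $G$; as written it is only a reference, leaving the reader to reconstruct this argument. You instead give a self-contained constructive proof: reduction to the attainable set of the identity by left invariance; a straight segment with unit planar speed and $u_3\equiv 1$, which reaches $(x_1,y_1,\sqrt{x_1^2+y_1^2})$ because $\tfrac12(x\,dy-y\,dx)$ vanishes along rays through the origin; the vertical control $u=(0,0,c)$ to increase $z$; and a large clockwise circle to decrease $z$, the key point being that the enclosed negative area grows like $r^2$ while $\int u_3\,dt$ grows only like $2\pi r$, so the constraint $u_3\ge 0$ does not force $z$ to increase. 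Your admissibility checks, the formula $f(r)=\pi r(2-r)$, and the explicit radius $r=1+\sqrt{1-(z_1-z')/\pi}$ are all correct (the degenerate case $(x_1,y_1)=(0,0)$ is handled by skipping the first piece). What each approach buys: the paper's argument is shorter and ties controllability to the extremal analysis already performed, while yours is elementary, produces explicit admissible trajectories with time estimates, and makes transparent exactly the mechanism (area beating elapsed time) that underlies the paper's subsequent corollaries about closed admissible loops and infinite Lorentzian distance.
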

\begin{proofoftheorem}
Следует из принципа максимума Понтрягина в геометрической формулировке \cite{lor_lob}.
\end{proofoftheorem}

\begin{corollary}
Для любой точки $q \in M$ существует замкнутая допустимая траектория системы \eqref{pr1}, \eqref{pr2} положительной длины \eqref{pr4}.
\end{corollary}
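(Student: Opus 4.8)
Приведём план доказательства. Зафиксируем произвольную точку $q \in M$. Искомую траекторию построим как конкатенацию (склейку) короткой времениподобной дуги положительной длины и какой-либо допустимой траектории, возвращающей в $q$; существование второй обеспечивает Теорема~1.

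Первый шаг --- взять постоянное управление $u(t) \equiv (0, 0, 1)$ на отрезке $t \in [0,1]$. Оно допустимо, так как $0^2 + 0^2 - 1^2 \le 0$ и $1 \ge 0$, то есть $u(t) \in U$. Отвечающая ему траектория $\gamma_1$ системы \eq{pr1} с началом в $q$ --- это кусок интегральной кривой поля $X_3$; её конец $q' = \gamma_1(1)$ отличен от $q$ (возрастает координата $z$), а лоренцева длина равна
$$
J(\gamma_1) = \int_0^1 \sqrt{1 - 0 - 0}\,dt = 1 > 0.
$$

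Второй шаг --- воспользоваться Теоремой~1: система \eq{pr1}, \eq{pr2} глобально управляема, поэтому найдётся допустимая траектория $\gamma_2$, ведущая из $q'$ обратно в $q$. Конкатенация $\gamma = \gamma_1 * \gamma_2$ (после естественной перепараметризации времени) есть допустимая траектория системы \eq{pr1}, \eq{pr2}, начинающаяся и оканчивающаяся в точке $q$, то есть замкнутая. Поскольку на множестве $U$ подынтегральное выражение $\sqrt{u_3^2 - u_1^2 - u_2^2}$ неотрицательно (ввиду неравенства $u_1^2 + u_2^2 - u_3^2 \le 0$), функционал \eq{pr4} аддитивен при склейке и неотрицателен на каждом куске, так что
$$
J(\gamma) = J(\gamma_1) + J(\gamma_2) \ge J(\gamma_1) = 1 > 0,
$$
что и требуется.

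Единственное место, требующее аккуратности, --- корректность склейки двух допустимых траекторий: нужно согласовать измеримые управления на стыке и перепараметризовать время так, чтобы получилась одна траектория на одном отрезке; это рутинно. Иных препятствий не видно --- основную работу выполняет уже установленная глобальная управляемость.
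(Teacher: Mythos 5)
Ваше рассуждение верно и по существу совпадает с тем, как следствие выводится в статье: оно подаётся как непосредственное следствие Теоремы~1 о глобальной управляемости, и именно её вы используете для возвращения в точку $q$. Полезно, что вы явно указали времениподобную дугу с управлением $u \equiv (0,0,1)$, гарантирующую положительность длины \eq{pr4} (в статье этот шаг оставлен неявным), а также отметили неотрицательность подынтегрального выражения на $U$, обеспечивающую оценку $J(\gamma) \ge J(\gamma_1) > 0$.
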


\begin{corollary}
Для любых точек $q_0,q_1 \in M$ лоренцево расстояние между ними есть $d(q_0,q_1)=+\infty.$ Поэтому не существует лоренцевой длиннейшей, их соединяющей.
\end{corollary}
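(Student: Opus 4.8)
По определению лоренцево расстояние $d(q_0,q_1)$ равно точной верхней грани значений функционала \eq{pr4} по всем допустимым траекториям системы \eq{pr1}, \eq{pr2}, идущим из $q_0$ в $q_1$. Поэтому план состоит в том, чтобы предъявить последовательность таких траекторий со значениями $J$, стремящимися к $+\infty$; равенство $d(q_0,q_1)=+\infty$ будет из этого немедленно следовать, а несуществование длиннейшей --- из того, что каждая отдельная допустимая траектория имеет конечную длину.

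Сначала я зафиксирую произвольную допустимую траекторию $\gamma_0\colon[0,t_1]\to M$ из $q_0$ в $q_1$ --- она существует по доказанной выше теореме о глобальной управляемости. Положим $\ell_0=J(\gamma_0)\geq 0$. Затем, применяя предыдущее следствие к точке $q_1$, получу замкнутую допустимую траекторию $\delta\colon[0,\tau]\to M$ с $\delta(0)=\delta(\tau)=q_1$ и $\ell_1=J(\delta)>0$. Для каждого $n\in\N$ рассмотрю траекторию $\gamma_n$, полученную последовательным прохождением $\gamma_0$ и затем $n$ копий петли $\delta$ (с соответствующими сдвигами времени). Соответствующее управление есть склейка конечного числа измеримых управлений со значениями в $U$, и потому снова измеримо и $U$-значно, поскольку множество $U$ из \eq{pr2} не зависит ни от $t$, ни от $q$; значит $\gamma_n$ --- допустимая траектория из $q_0$ в $q_1$. Так как интеграл \eq{pr4} аддитивен относительно такой склейки, $J(\gamma_n)=\ell_0+n\ell_1\to+\infty$ при $n\to\infty$, откуда $d(q_0,q_1)=+\infty$.

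Остаётся показать, что лоренцевой длиннейшей не существует. Для любой фиксированной допустимой траектории $\gamma$ на $[0,t_1]$ с интегрируемым управлением $u(\cdot)$ из условия $u_3\geq 0$ и неравенства $u_3^2-u_1^2-u_2^2\leq u_3^2$ следует $\sqrt{u_3^2-u_1^2-u_2^2}\leq u_3\leq |u(t)|$ почти всюду, поэтому $J(\gamma)\leq\int_0^{t_1}|u(t)|\,dt<+\infty$. Итак, каждая допустимая траектория имеет конечную длину, а следовательно ни одна из них не реализует значение $d(q_0,q_1)=+\infty$; значит лоренцевой длиннейшей, соединяющей $q_0$ и $q_1$, нет.

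Содержательная часть рассуждения уже заключена в предыдущих теореме и следствии; единственная оставшаяся трудность --- чисто техническая: аккуратно проверить, что склейка конечного числа допустимых траекторий снова является допустимой траекторией и что функционал \eq{pr4} на ней складывается. Ожидаю, что это место потребует лишь стандартной проверки непрерывности и абсолютной непрерывности в точках стыка и кусочной измеримости управления.
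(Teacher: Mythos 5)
Ваше рассуждение верно и по существу совпадает с тем, что подразумевается в статье: следствие там оставлено без выкладок как прямое соединение теоремы о глобальной управляемости с предыдущим следствием о замкнутой траектории положительной длины, и именно эту конкатенационную конструкцию (траектория из $q_0$ в $q_1$ плюс $n$ копий петли, откуда $d(q_0,q_1)=+\infty$, при конечной длине каждой отдельной допустимой траектории) вы аккуратно и корректно развернули. Дополнительных замечаний нет.
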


\section{Вторая задача}
\subsection{Постановка задачи}
Сформулируем вторую задачу Лоренца на группе Гейзенберга следующим образом \cite{beem}:
\begin{align}
&\dot q = \sum_{i=1}^3 u_i X_i, \qquad q = (x, y, z) \in G, \label{pr21}\\
&u \in U = \{ (u_1, u_2, u_3) \in \R^3 \mid -u_1^2 + u_2^2+ u_3^2 \leq 0, \ u_1 \geq 0\},\label{pr22}\\
&q(0) = q_0 = (0, 0, 0), \qquad q(t_1) = q_1,  \label{pr23}\\
&J(\gamma) = \int_0^{t_1} \sqrt{u_1^2 - u_2^2-u_3^2} \, dt  \to \max. \label{pr24}
\end{align}
Обозначим $h_i=\langle p,X_i\rangle.$ Тогда функция Понтрягина выражается следующим образом:
\begin{equation*}
h_{u}^\nu =u_1 h_1+u_2 h_2 +u_3 h_3-\nu  \sqrt{u_1^2 - u_2^2-u_3^2}.
\end{equation*}

\subsection{Аномальный случай принципа максимума Понтрягина}
Пусть $\nu=0$.
Определим $(a,b,c)$ как компоненты ковектора $p$ в канонических координатах. Они изменяются по закону $\displaystyle \dot{p_i}=-\frac{\partial h_{u}^0}{\partial q_i}$:

Систему управления можно записать в виде: \\
\begin{equation*}
\left\{\begin{array}{l}
\displaystyle \dot{x} = u_1, \\
\displaystyle \dot{y} = u_2, \\
\displaystyle \dot{z} = -\frac{y}{2}u_1+\frac{x}{2}u_2+u_3,
\end{array}\right.
\quad
\left\{\begin{array}{l}
\displaystyle \dot{a} = -c\frac{u_2}{2}, \\
\displaystyle \dot{b} = c\frac{u_1}{2}, \\
\displaystyle \dot{c} = 0.
\end{array}\right.
\end{equation*}\\ 

Связь между $h_i$ и $p_i$ явно записывается как: \\
\begin{equation*}
\left\{\begin{array}{l}
\displaystyle h_1 = a-c\frac{y}{2}, \\
\displaystyle h_2 = b+c\frac{x}{2}, \\
\displaystyle h_3 = c. \\
\end{array}\right.
\end{equation*}\\ 

Функция Понтрягина $h_{u}^0$ принимает вид $h_{u}^0 =u_1 h_1+u_2 h_2+u_3 h_3.$ 
Выясним, при каких условиях функция $h_{u}^0$  достигает максимума, и на каких управлениях  $u \in U$.\\

$h_i$ суть наперед заданные константы. Опишем пространство $h_i$, разделив его на 4 подпространства.\\
1) $h_2^2+h_3^2-h_1^2 \leq 0$, $h_1 \geq 0$.\\
Запишем ограничения для случая 1
\begin{equation*}
\left\{\begin{array}{l}
\displaystyle u_1^2 \geq u_2^2+u_3^2, \\
\displaystyle h_1^2 \geq h_2^2+h_3^2, \\
\end{array}\right.
\end{equation*}\\  
Тогда справедливы следующие неравенства\\
$u_1^2h_1^2\geq u_2^2h_2^2+h_3^2u_3^2+u_2^2h_3^2+u_3^2h_2^2, $\\
$u_2^2h_2^2+h_3^2u_3^2+u_2^2h_3^2+u_3^2h_2^2 - (u_2h_2+u_3h_3)^2=(u_2h_3-u_3h_2)^2\geq 0.$\\
Откуда следует, что \\
$|u_1h_1|\geq |u_2h_2|+|u_3h_3|.$\\
Исследуем $u_1h_1$. Если $h_1>0$, в совокупности с неограниченностью $u_1$, то и $u_1h_1$ неограниченна.\\
$u_1=0$ означает и $u_2=u_3=0$.\\
Максимума функции $h_u^0$ либо не существует, либо решение тривиально.\\

2) $h_2^2+h_3^2-h_1^2 < 0$, $h_1 < 0.$\\
Почти аналогично 1 случаю. 
\begin{equation*}
\left\{\begin{array}{l}
\displaystyle u_1^2 \geq u_2^2+u_3^2, \\
\displaystyle h_1^2 \geq h_2^2+h_3^2, \\
\end{array}\right.
\end{equation*}\\ 
Только из-за строгого равенства $h_u^0 \neq 0.$ То есть в любом случае функция отрицательна. Максимума нет.

3) $h_2^2+h_3^2-h_1^2 > 0$.\\
При $u = k(\sqrt{h_2^2+h_3^2},h_2, h_3)$ получаем $h_u = k \sqrt{h_2^2+h_3^2}(\sqrt{h_2^2+h_3^2} + h_1) \to + \infty$  при $k \to + \infty$.\\
Поэтому в случае 3) максимума не существует.

4) $h_2^2+h_3^2-h_1^2 = 0$, $h_1 < 0$.\\
$u_1^2=u_2^2+u_3^2.$ Выбираем натуральную параметризацию $u_3=-1$, $u_1^2=u_2^2+1$.\\
Максимум функции $h_u^0=h_1u_1+ h_2u_2 + h_3u_3=0$, когда она равна 0. \\
$h_1^2 = h_2^2+h_3^2$. $h_1^2 = h_2^2+1,$ при этом $h_3=-1.$\\
$h_1u_1+ h_2u_2 + \sqrt{h_1^2-h_2^2}\sqrt{u_1^2-u_2^2}=0.$

В анормальном случае управление имеет вид $\displaystyle u=(\sqrt{h_2^2+h_3^2},h_2,h_3),$ поэтому анормальные траектории светоподобны.

Пусть $h_1^0=-\sqrt{(h_2^0)^2+h_3^2}.$ Тогда анормальные экстремали имеют следующий вид.\\
Если $h_3=0,$ то $h_1,h_2 \equiv const$ и $x=-h_1^0t, y=h_2^0t, z=0.$\\
Если $h_3\neq0$, то

\begin{equation*}
\left\{\begin{array}{l}
\displaystyle h_1 = |h_3|(\cosh C -\cosh \tau)+h_1^0,\\
\displaystyle h_2 = h_3\sinh \tau,\\
\displaystyle h_3 \equiv \mathrm{const},\\
\displaystyle x = \sinh \tau -\sinh C,\\
\displaystyle y = \mathrm{sgn} h_3(\cosh \tau -\cosh C),\\
\displaystyle z = \frac{h_3t+\sinh(h_3t))}{2},\\
\displaystyle C=\mathrm{arcsinh}\frac{h_2^0}{h_3}, \ \ \ \ \tau = C+|h_3|t.
\end{array}\right.
\end{equation*}

\begin{figure}
	    \centering
	    \includegraphics[width = 10cm]{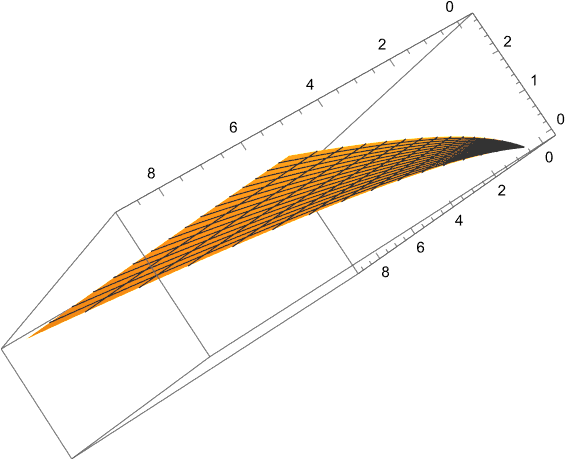}
            \caption{Семейство решений системы с переменными $t\in[0,20], C_1 \in[0,1]$ и зафиксированной константой $h_3=0.1$.}
\end{figure}

\subsection{Нормальный случай принципа максимума Понтрягина}
Пусть $\nu=-1$. Выпишем функцию Понтрягина для нормального случая:
\begin{equation*}
\displaystyle h_u = h_{u}^{-1} =u_1 h_1+u_2 h_2+u_3 h_3 + \sqrt{u_1^2 - u_2^2-u_3^2}.
\end{equation*}

Рассмотрим несколько вариантов\\
1) $h_2^2+h_3^2-h_1^2 \leq 0$, $h_1 \geq 0$.\\
Если выбрать $(u_1, u_2, u_3) = k(h_1, h_2, h_3)$, то
$$
h_u^{-1} \to + \infty, \text{ при } k \to \infty. 
$$
Поэтому в случае 1) максимума не существует.

2) $h_2^2+h_3^2-h_1^2 > 0$.\\
При $u = k(\sqrt{h_2^2+h_3^2}, h_2, h_3)$ получаем \\
$\displaystyle h_u = k \sqrt{h_2^2+h_3^2}(h_1 + \sqrt{h_2^2+h_3^2}) \to + \infty$  при $k \to + \infty$.\\
Поэтому в случае 2) максимума не существует.

3) $h_2^2+h_3^2-h_1^2 = 0$, $h_1 < 0.$\\
Положим
\begin{align*}
&u_1 = \rho \cosh \alpha, \qquad u_2 = -\rho \sinh \alpha \cos \beta, \qquad u_3 = -\rho \sinh \alpha \sin \beta,\\
&h_1 = -R  \cosh b, \qquad h_2 = -R   \sinh b, \qquad h_3 = -R.
\end{align*}
Тогда $h_u = \rho(R(-\cosh \alpha \cosh b+\sinh \alpha \sinh b \cos \beta+\sinh \alpha \sin \beta)+1) \to + \infty$ при $b=0$, $b \to + \infty$, $\alpha\to +\infty$, $\rho \to + \infty$.\\
Поэтому в случае 3) максимума не существует.

4) $h_2^2+h_3^2-h_1^2 < 0$, $h_1 < 0.$\\
Положим
\begin{align*}
&u_1 = \rho \cosh \alpha, \qquad u_2 = \rho \sinh \alpha \cos \beta, \qquad u_3 = \rho \sinh \alpha \sin \beta,\\
&h_1 = -R  \cosh b, \qquad h_2 = R   \sinh b, \qquad h_3 = R.
\end{align*}
Тогда $h_u = \rho(R(-\cosh \alpha \cosh b+\sinh \alpha \sinh b \cos \beta+\sinh \alpha \sin \beta)+1) \to + \infty$ при $b=0$, $b \to + \infty$, $\alpha\to +\infty$, $\rho \to + \infty$.\\

Если $R < 1$, то $h_u \to + \infty$  при $\beta=0$, $b = \alpha$, $h_u=\rho(1-R).$ $\rho \to +\infty$.

Если $R > 1$, то $\max h_u =0$  при $\rho = 0$.

Если $R = 1$, то $\max h_u = \sqrt{h_2^2+h_3^2-h_1^2} + 1$ при 
$$(u_1, u_2, u_3) = (-h_1, h_2, h_3)/ \sqrt{h_2^2+h_3^2-h_1^2}.$$

Поэтому в нормальном случае экстремали суть траектории гамильтоновой системы с гамильтонианом $H = (h_1^2+h_2^2-h_3^2)/2$:
\begin{align}
&\dot h_1 = - h_2 h_3, \label{dh21}\\
&\dot h_2 = - h_1 h_3, \label{dh22}\\
&\dot   h_3 = 0, \label{dh23}\\
&\dot q = -h_1 X_1 + h_2 X_2 + h_3 X_3 \label{d2q}.
\end{align}

Без ограничений общности задаем, что $h_2^2+h_3^2-h_1^2=1$. Тогда $H=1/2.$

\begin{equation*}
\left\{\begin{array}{l}
\displaystyle \dot{x} = -h_1, \\
\displaystyle \dot{y} = h_2, \\
\displaystyle \dot{z} = yh_1/2+xh_2/2+h_3,
\end{array}\right.
\quad
\left\{\begin{array}{l}
\displaystyle \dot{h_1} = - h_2 h_3, \\
\displaystyle \dot{h_2} = -h_1 h_3, \\
\displaystyle \dot{h_3} = 0.
\end{array}\right.
\end{equation*}

Если $h_3=0,$ то $h_1,h_2\equiv \mathrm{const}$ и $x=-h_1t, y=h_2t, z=0.$
Если $h_3\neq0,$ то решение системы

\begin{equation*}
\left\{\begin{array}{l}
\displaystyle x = \frac{h_2^0(\cosh s-1)-h_1^0\sinh s}{h_3}, \\
\displaystyle y = \frac{h_2^0\sinh s-h_1^0(\cosh s-1)}{h_3}, \\
\displaystyle z = \frac{((2h_3^2-(h_1^0)^2+(h_2^0)^2)s+((h_1^0)^2-(h_2^0)^2)\sinh s)}{2h_3^2},\\
\displaystyle s=h_3t.
\end{array}\right.
\end{equation*}

\begin{figure}[h]
	    \centering
	    \includegraphics[width = 5.5cm]{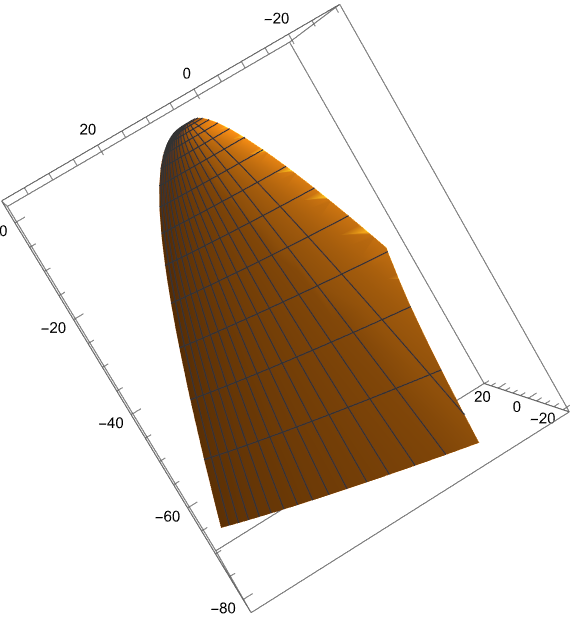}
            \caption{Семейство решений системы с переменными $t\in[0,20], h_1 \in[0,1]$ и зафиксированной константой $h_3=0.1$.}
\end{figure}

\subsection{Множество достижимости и существование оптимальных траекторий}
\begin{theorem}
Множество достижимости системы \eqref{pr21}, \eqref{pr22} из точки $q_0$ за произвольное неотрицательное время есть
$\mathcal{A}_{q_0}=$\\
$\{(x,y,z)\in M |0 < |z|\leq \frac{t+\sinh t}{2},t=\mathrm{arcosh}\frac{x^2-y^2+2}{2}\}\cup \{(x,y,z)\in M |x\geq|y|, z=0\}.$
\end{theorem}
\begin{proofoftheorem}
Следует из теоремы Адамара о глобальном диффеоморфизме.
\end{proofoftheorem}

\begin{theorem}
Для любых точек $q_0 \in M, q_1\in \mathcal{A}_0$ существует лоренцева длиннейшая, их соединяющей.
\end{theorem}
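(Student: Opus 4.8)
The plan is to convert the problem, whose control set $U$ is an unbounded cone, into an optimal control problem with a \emph{compact} control set by reparametrizing admissible trajectories by the coordinate $x$, and then to invoke Filippov's existence theorem.

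First I would dispose of the trivial case $q_1=q_0$ (the constant trajectory is the longest, of length $0$), so assume $q_1\neq q_0$. Along every admissible trajectory $\dot x=u_1\geq 0$, hence $x(\cdot)$ is nondecreasing and $x_1:=x(t_1)\geq 0$; moreover, on any interval where $u_1=0$ the constraint $u_1^2\geq u_2^2+u_3^2$ forces $u_2=u_3=0$, so the state is frozen and the functional \eq{pr24} gains nothing there. Excising such intervals changes neither the endpoints nor $J(\gamma)$ (the terminal time $t_1$ being free), so after this reduction either $x_1=0$, whence $q_1=q_0$, or $x_1>0$ and $u_1>0$ a.e., so that $s=x(t)$ is a strictly increasing absolutely continuous bijection of $[0,t_1]$ onto $[0,x_1]$. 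Reparametrizing by $s$ and putting $v_2=u_2/u_1$, $v_3=u_3/u_1$, the system \eq{pr21} turns into $y'=v_2$, $z'=-\tfrac y2+\tfrac s2 v_2+v_3$ (with $'=d/ds$), the constraint \eq{pr22} becomes $v_2^2+v_3^2\leq 1$, and, since the integrand of \eq{pr24} is positively homogeneous of degree one in $u$, the functional becomes $J(\gamma)=\int_0^{x_1}\sqrt{1-v_2^2-v_3^2}\,ds$. In particular $J(\gamma)\leq x_1<\infty$, so $d(q_0,q_1)<\infty$, and the admissible set of this reduced problem is nonempty precisely because $q_1\in\A_{q_0}$.

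Next I would apply Filippov's existence theorem (see, e.g., \cite{notes}) on the fixed interval $[0,x_1]$. The reduced dynamics are affine in the control $(v_2,v_3)$, which now ranges over the compact convex disc $\{v_2^2+v_3^2\leq 1\}$; and from $|y'|\leq 1$ one gets $|y|\leq x_1$ and then $|z'|\leq x_1+1$, so every admissible reduced trajectory stays in one fixed compact set and a maximizing sequence is precompact (equicontinuous states, weak-$*$ bounded controls). The only place the geometry of the problem enters is convexity of the extended velocity set $\bigl\{\bigl(v_2,\ -\tfrac y2+\tfrac s2 v_2+v_3,\ \mu\bigr):v_2^2+v_3^2\leq 1,\ \mu\leq\sqrt{1-v_2^2-v_3^2}\bigr\}$ for fixed $(s,y)$: it is an affine image of $\{(v_2,v_3,\mu):(v_2,v_3)\in\overline D,\ \mu\leq\sqrt{1-v_2^2-v_3^2}\}$, which is convex because $(v_2,v_3)\mapsto\sqrt{1-v_2^2-v_3^2}$ is concave on the unit disc $\overline D$ --- this is precisely concavity of the Lorentzian length element on the future cone. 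Filippov's theorem then produces an optimal reduced trajectory; choosing $u_1\equiv 1$ (so $t_1=x_1$), $u_2=v_2$, $u_3=v_3$ lifts it to an admissible trajectory of \eq{pr21}, \eq{pr22} joining $q_0$ to $q_1$ and realizing $J=d(q_0,q_1)$, i.e. a Lorentzian longest.

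The main obstacle is exactly the noncompactness of $U$: along a naive maximizing sequence the controls could escape to infinity, and Filippov's theorem does not apply directly. The reparametrization by $x$ removes this difficulty --- conceptually, it slices the conical velocity set at the level $u_1=1$ --- and is legitimate because the cost is reparametrization-invariant and, on the cone, bounded by its $u_1$-component, whose integral is the fixed number $x_1$. The remaining points (excision of the $\{u=0\}$ intervals, absolute continuity of the change of variable, upper semicontinuity of $J$ under the relevant convergence, and the equality of the suprema for the original and the reduced problems) are routine.
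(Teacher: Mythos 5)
Your proof is essentially correct, but it follows a genuinely different route from the paper. The paper's proof is a one-line citation: it invokes the Avez--Seifert-type theorem on existence of longest causal curves in globally hyperbolic Lorentzian manifolds \cite{beem}, applied to the left-invariant Lorentzian structure defined by \eq{pr21}--\eq{pr24} (this implicitly presupposes that the structure is globally hyperbolic, which the paper does not verify explicitly). You instead give a direct existence proof inside optimal control theory: using $\dot x = u_1 \geq \sqrt{u_2^2+u_3^2}$ you reparametrize by $s=x(t)$, which slices the unbounded cone $U$ at the level $u_1=1$, turns the problem into a fixed-interval problem on $[0,x_1]$ with compact convex control set (the unit disc), affine dynamics and concave running cost, and then Filippov's theorem applies; reparametrization invariance of \eq{pr24} gives equality of the suprema, and the optimal reduced trajectory lifts with $u_1\equiv 1$. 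Your argument buys self-containedness, avoids the global-hyperbolicity issue, and yields as a by-product the finiteness bound $d(q_0,q_1)\leq x_1$ (in pleasant contrast to the first problem, where $d\equiv+\infty$); the paper's argument is shorter and places the result in the general Lorentzian framework. One small technical remark: the set $\{t \mid u_1(t)=0\}$ is only measurable, not necessarily a union of intervals, so ``excising intervals'' is slightly imprecise; the cleaner formulation is to note that $|\dot y|\leq \dot x$ and $|\dot z|\leq C\dot x$ along admissible trajectories, so $y$ and $z$ are Lipschitz functions of $x$ and the change of variable $s=x(t)$ (or the time change $\sigma(t)=\int_0^t\chi_{\{u_1>0\}}d\tau$) can be performed directly without excision. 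This is a routine fix and does not affect the validity of your approach.
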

\begin{proofoftheorem}
Следует из теоремы о существовании лоренцевых длиннейших в глобально гиперболических лоренцевых многообразиях \cite{beem}.
\end{proofoftheorem}

\section{Третья задача}
\subsection{Постановка задачи}
Сформулируем третью Лоренцеву задачу на группе Гейзенберга следующим образом \cite{beem}:
\begin{align}
&\dot q = \sum_{i=1}^3 u_i Y_i, \qquad q = (x, y, z) \in G, \label{pr31}\\
&u \in U = \{ (u_1, u_2, u_3) \in \R^3 \mid u_1^2 - u_2^2- u_3^2 \geq 0, \ u_1 \geq 0\},\label{pr32}\\
&q(0) = q_0 = (0, 0, 0), \qquad q(t_1) = q_1,  \label{pr33}\\
&J = \int_0^{t_1} \sqrt{u_1^2 - u_2^2-u_3^2} \, dt  \to \max, \label{pr34}
\end{align}
\noindent с базисом левоинвариантных векторных полей:
\begin{align}
&Y_1 = \pder{}{x} + \pder{}{y} + \frac{x-y}{2}\pder{}{z}, \qquad
Y_2 = \pder{}{x} - \pder{}{y} - \frac{x+y}{2}\pder{}{z}, \\
&Y_3 =   -\pder{}{x} - \pder{}{y} + \frac{2-x+y}{2}\pder{}{z}.
\end{align}

\noindent Рассмотрим следующую систему управления:
\begin{equation}\label{eq:2sys}
\left\{\begin{array}{l}
\displaystyle \dot{x} = u_1+u_2-u_3, \\
\displaystyle \dot{y} = u_1-u_2-u_3, \\
\displaystyle \dot{z} = \frac{x-y}{2}u_1-\frac{x+y}{2}u_2+\frac{2-x+y}{2}u_3.
\end{array}\right.
\end{equation}

Запишем общий вид функции Понтрягина
\begin{equation*}\label{eq:3pontHam}
h_{u}^\nu =\langle p, \sum\limits_{i=1}^3 u_i Y_i\rangle + \nu  \sqrt{u_1^2 - u_2^2-u_3^2}, \quad p \in T^*M, \,\, \nu \leq 0.
\end{equation*}

Обозначим $h_i=\langle p,Y_i\rangle.$ Тогда функция Понтрягина выражается следующим образом:
\begin{equation*}
h_{u}^\nu =u_1 h_1+u_2 h_2 +u_3 h_3-\nu  \sqrt{u_1^2 - u_2^2-u_3^2}.
\end{equation*}

В формулировке ПМП, не ограничивая общности, достаточно рассмотреть два случая: $\nu= 0$ — аномальный случай и $\nu =-1$ --- нормальный случай. Рассмотрим их подробно.

\subsection{Аномальный случай принципа максимума Понтрягина}
Пусть $\nu=0$.
Определим $(a,b,c)$ как компоненты ковектора $p$ в канонических координатах. Они изменяются по закону $\displaystyle \dot{p_i}=-\frac{\partial h_{u}^0}{\partial q_i}$:

Систему управления можно записать в виде: \\
\begin{equation*}
\left\{\begin{array}{l}
\displaystyle \dot{x} = u_1+u_2-u_3, \\
\displaystyle \dot{y} = u_1-u_2-u_3, \\
\displaystyle \dot{z} = \frac{x-y}{2}u_1-\frac{x+y}{2}u_2+\frac{2-x+y}{2}u_3,
\end{array}\right.
\quad
\left\{\begin{array}{l}
\displaystyle \dot{a} = (-u_1+u_2+u_3)\frac{c}{2}, \\
\displaystyle \dot{b} = (u_1+u_2-u_3)\frac{c}{2}, \\
\displaystyle \dot{c} = 0.
\end{array}\right.
\end{equation*}\\ 

Связь между $h_i$ и $p_i$ явно записывается как: 
\begin{equation*}
\left\{\begin{array}{l}
\displaystyle h_1 = a+b+c\frac{x-y}{2}, \\
\displaystyle h_2 = a-b-c\frac{x+y}{2}, \\
\displaystyle h_3 = c-h_1. \\
\end{array}\right.
\end{equation*}\\ 

$h_i$ суть наперед заданные константы. Опишем пространство $h_i$, разделив его на 4 подпространства.\\
1) $h_2^2+h_3^2-h_1^2 \leq 0$, $h_1 \geq 0$.\\
Запишем ограничения для случая 1
\begin{equation*}
\left\{\begin{array}{l}
\displaystyle u_1^2 \geq u_2^2+u_3^2, \\
\displaystyle h_1^2 \geq h_2^2+h_3^2. \\
\end{array}\right.
\end{equation*}\\  
Тогда справедливы следующие неравенства\\
$u_1^2h_1^2\geq u_2^2h_2^2+h_3^2u_3^2+u_2^2h_3^2+u_3^2h_2^2, $\\
$u_2^2h_2^2+h_3^2u_3^2+u_2^2h_3^2+u_3^2h_2^2 - (u_2h_2+u_3h_3)^2=(u_2h_3-u_3h_2)^2\geq 0.$\\
Откуда следует, что \\
$|u_1h_1|\geq |u_2h_2|+|u_3h_3|.$\\
Исследуем $u_1h_1$. Если $h_1>0$, в совокупности с неограниченностью $u_1$, то и $u_1h_1$ неограниченна.\\
$u_1=0$ означает и $u_2=u_3=0$.\\
Максимума функции $h_u^0$ либо не существует, либо решение тривиально.\\

2) $h_2^2+h_3^2-h_1^2 < 0$, $h_1 < 0.$\\
Почти аналогично 1 случаю. 
\begin{equation*}
\left\{\begin{array}{l}
\displaystyle u_1^2 \geq u_2^2+u_3^2, \\
\displaystyle h_1^2 \geq h_2^2+h_3^2, \\
\end{array}\right.
\end{equation*}\\ 
Только из-за строгого равенства $h_u^0 \neq 0.$ То есть в любом случае функция отрицательна. Максимума нет.

3) $h_2^2+h_3^2-h_1^2 > 0$.\\
При $u = k(\sqrt{h_2^2+h_3^2},h_2, h_3)$ получаем \\
$h_u = k \sqrt{h_2^2+h_3^2}(\sqrt{h_2^2+h_3^2} + h_1) \to + \infty$  при $k \to + \infty$.\\
Поэтому в случае 3) максимума не существует.

4) $h_2^2+h_3^2-h_1^2 = 0$, $h_1 < 0$.\\
$u_1^2=u_2^2+u_3^2.$ \\
Максимум функции $h_u^0=h_1u_1+ h_2u_2 + h_3u_3=0$ достигается, когда она равна 0. \\
В таком случае управление $(u_1, u_2, u_3) = k (-h_1, h_2, h_3)$. За счет перепараметризации можно считать $k = 1$, т.е. $u_1 = - h_1$, $u_2 =  h_2$, $u_3 = h_3$. Совместно с $c=h_1+h_3$ система примет вид:
\begin{equation*}
\left\{\begin{array}{l}
\displaystyle \dot{x} = -c+2c^2t+h_{20}, \\
\displaystyle \dot{y} = -c-2c^2t-h_{20}, \\
\displaystyle \dot{z} = \frac{x-y}{2}u_1-\frac{x+y}{2}u_2+\frac{2-x+y}{2}u_3,
\end{array}\right.
\quad
\left\{\begin{array}{l}
\displaystyle \dot{h_1} = 2cu_2, \\
\displaystyle \dot{h_2} = 2c(-u_1+u_3), \\
\displaystyle \dot{h_3} = -2cu_2. \\
\end{array}\right.
\end{equation*}

А решение системы примет вид:
\begin{equation*}
\left\{\begin{array}{l}
\displaystyle x = (h_{20}-h_{30}+\sqrt{h^2_{20}+h^2_{30}})t+(\sqrt{h^2_{20}+h^2_{30}}-h_{30})^2t^2, \\
\displaystyle y = (-h_{20}-h_{30}+\sqrt{h^2_{20}+h^2_{30}})t-(\sqrt{h^2_{20}+h^2_{30}}-h_{30})^2t^2, \\
\displaystyle z = h_3t-ch_{20}t^2-\frac{c^3t^3}{3},
\end{array}\right.
\end{equation*}
где $c=-\sqrt{h^2_{20}+h^2_{30}}+h_{30}.$

\begin{figure}
	    \centering
	    \includegraphics[width = 7cm]{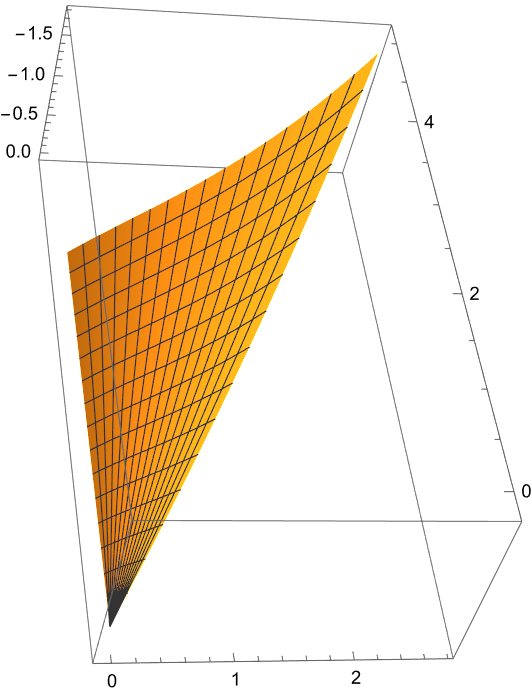}
            \caption{Семейство решений системы с переменными $t\in[0,2], h_1 \in[0,1]$ и зафиксированной константой $h_3=2$.}
\end{figure}

\subsection{Нормальный случай принципа максимума Понтрягина}
Пусть $\nu = -1$.
Как показано в работе \cite{podobryaev}, нормальные траектории удовлетворяют гамильтоновой системе ОДУ с гамильтонианом $H = (-h_1^2 + h_2^2 + h_3^2)/2$  и принадлежат области $h_1 < - \sqrt{h_2^2 + h_3^2}$ (при этом гамильтониан считается чуть иначе). При этом можно считать, что $H \equiv - \frac 12$. $u_1=-h_1, u_2=h_2, u_3=h_3.$ 

Систему управления можно записать в виде: \\
\begin{equation*}
\left\{\begin{array}{l}
\displaystyle \dot{x} = -h_1+h_2-h_3, \\
\displaystyle \dot{y} = -h_1-h_2-h_3, \\
\displaystyle \dot{z} = \frac{-x+y}{2}h_1-\frac{x+y}{2}h_2+\frac{2-x+y}{2}h_3,
\end{array}\right.
\quad
\left\{\begin{array}{l}
\displaystyle \dot{a} = (h_1+h_2+h_3)\frac{c}{2}, \\
\displaystyle \dot{b} = (-h_1+h_2-h_3)\frac{c}{2}, \\
\displaystyle \dot{c} = 0.
\end{array}\right.
\end{equation*}\\ 

Связь между $h_i$ и $p_i$ явно записывается как: \\
\begin{equation*}
\left\{\begin{array}{l}
\displaystyle h_1 = a+b+c\frac{x-y}{2}, \\
\displaystyle h_2 = a-b-c\frac{x+y}{2}, \\
\displaystyle h_3 = -a-b+c\frac{2-x+y}{2}. \\
\end{array}\right.
\end{equation*}\\ 

Решение системы примет вид: \\
$$h_{10}=-\sqrt{1+h^2_{20}+h^2_{30}}, c=-\sqrt{1+h^2_{20}+h^2_{30}}+h_{30},$$
\begin{equation*}
\left\{\begin{array}{l}
\displaystyle x = (h_{20}-h_{30}+\sqrt{1+h^2_{20}+h^2_{30}})t+(\sqrt{1+h^2_{20}+h^2_{30}}-h_{30})^2t^2, \\
\displaystyle y = (-h_{20}-h_{30}+\sqrt{1+h^2_{20}+h^2_{30}})t-(\sqrt{1+h^2_{20}+h^2_{30}}-h_{30})^2t^2, \\
\displaystyle z = h_3t-ch_{20}t^2-\frac{c^3t^3}{3}.
\end{array}\right.
\end{equation*}\\ 

\begin{figure}[h]
        \centering
	    \includegraphics[width = 7cm]{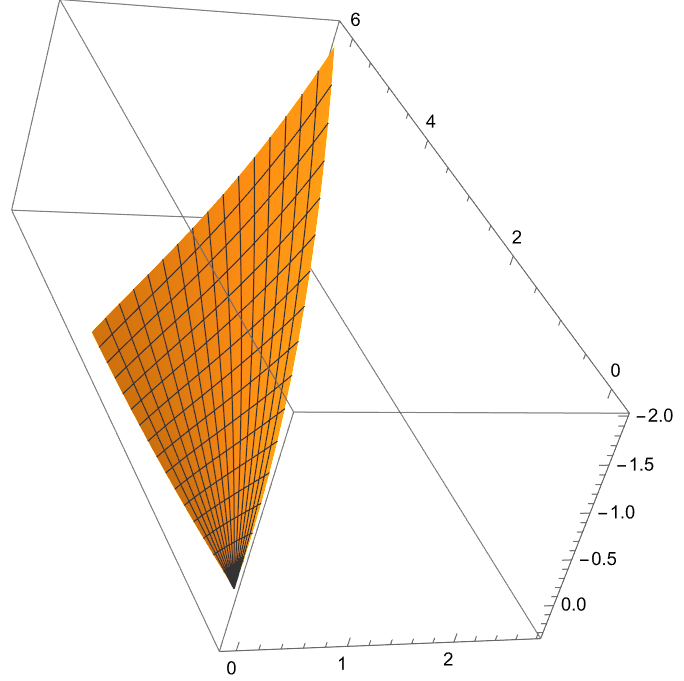}
            \caption{Семейство решений системы с переменными $t\in[0,2], h_2 \in[0,1]$ и зафиксированной константой $h_3=2$.}
\end{figure}

\begin{figure}[h]
        \centering
	    \includegraphics[width = 7cm]{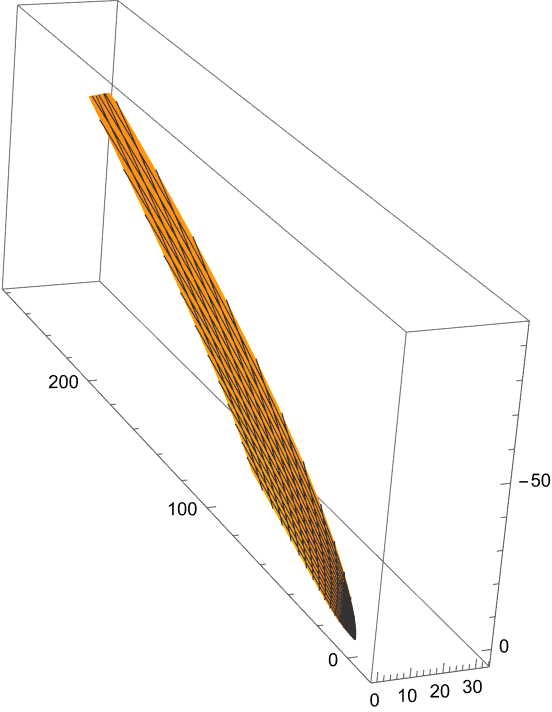}
            \caption{Семейство решений системы с переменными $t\in[0,20], h_2 \in[0,1]$ и зафиксированной константой $h_3=2$.}
\end{figure}

\section{Заключение}
В работе начато исследование трех лоренцевых задач на группе Гейзенберга. К задачам применен принцип максимума Понтрягина, получена параметризация анормальных и нормальных экстремальных траекторий. Исследованы множества достижимости и существование оптимальных траекторий.


\begin{thebibliography}{99}

\bibitem{Muller}
 Muller, O., Sanchez, M. An Invitation to Lorentzian Geometry. Jahresber. Dtsch.
Math.-Ver., 115:3–4 (2014), 153–183.

\bibitem{beem}
Beem, J.K., Ehrlich, P.E., Easley, K.L.: {\em Global Lorentzian Geometry}. Monographs
Textbooks Pure Appl. Math. 202, Marcel Dekker Inc. (1996)

\bibitem{Ivanov}
Иванов, А. О., Тужилин, А. О. Лекции по классической дифференциальной геометрии, Логос, М., 2009.

\bibitem{intro}
Сачков Ю.Л. {\em Введение в геометрическую теорию управления}, М.: URSS, 2021

\bibitem{lor_lob}
Сачков Ю.Л. {\em Лоренцева геометрия на плоскости Лобачевского}. Математические заметки, 2023, том 114, выпуск 1, страницы 154–157.

\bibitem{podobryaev}
А.В.Подобряев. Сублоренцевы экстремали, заданные антинормой //
Дифференциальные уравнения. 60, 3, 386–398 (2024)





\end{thebibliography}
\end{document}